\newtheorem{thm}{Theorem}[section]
\newtheorem{lema}[thm]{Lemma}
\newtheorem{prop}[thm]{Proposition}
\newtheorem{cor}[thm]{Corollary}
\newtheorem{question}{Question}[section]
\theoremstyle{definition}
\newtheorem{ex}[thm]{Example}
\newtheorem{rmk}[thm]{Remark}
\newtheorem{defn}[thm]{Definition}
\newcommand{\tp}{\underline{m}}
\newcommand{\lab}{\lambda_{a,b}}
\newcommand{\lcd}{\lambda_D^c}
\newcommand{\dab}{D_{a,b}}
\newcommand{\Mod}[1]{\ \left(\mathrm{mod}\ #1\right)}
\newcommand{\rk}{\textrm{rk}}
\renewcommand{\deg}{\textrm{deg}}
\newcommand{\ddiv}{\textrm{div}}
\newcommand{\PP}{\mathbb{P}}
\newcommand{\RR}{\mathbb{R}}
\newcommand{\ZZ}{\mathbb{Z}}
\newcommand{\TT}{\mathbb{T}}
\newcommand{\cO}{\mathcal{O}}
\newcommand{\cE}{\mathcal{E}}
\newcommand{\Trop}{\operatorname{Trop}}
\begin{document}
\title{Scrollar Invariants of Tropical Curves}
\author{David Jensen}
\address{David Jensen: Department of Mathematics,  University of Kentucky \hfill \newline\texttt{}
\indent 733 Patterson Office Tower,
Lexington, KY 40506--0027, USA}
\email{{\tt dave.jensen@uky.edu}}
\author{Kalila Lehmann}
\address{Kalila Lehmann: Department of Mathematics,  Georgia Insitute of Technology \hfill \newline\texttt{}
\indent 686 Cherry Street,
Atlanta, GA 30332-0160 USA}
\email{{\tt  klehmann6@gatech.edu.}}
\maketitle

\begin{abstract}
We define scrollar invariants of tropical curves with a fixed divisor of rank 1.  We examine the behavior of scrollar invariants under specialization, and provide an algorithm for computing these invariants for a much-studied family of tropical curves.  Our examples highlight many parallels between the classical and tropical theories, but also point to some substantive distinctions.
\end{abstract}

\section{Introduction}

In this note, we initiate a study of scrollar invariants of tropical curves.  Classically, every canonically embedded trigonal curve is contained in a unique rational normal surface.  Any such surface is isomorphic to a Hirzebruch surface
\[
\mathbb{F}_m := \PP(\cO_{\PP^1} \oplus \cO_{\PP^1}(m)),
\]
and the integer $m$ is known as the \emph{Maroni invariant} of the trigonal curve.

More generally, given a curve $X$ and a dominant map $\pi: X \to \PP^1$ of degree $k$, one defines the \emph{Tschirnhausen bundle} of $\pi$ to be the dual of $\cE^{\vee} = \pi_*\cO_X/\cO_{\PP^1}$.  The Tschirnhausen bundle is a vector bundle of rank $k-1$ on $\PP^1$.  As such, it splits into a direct sum of line bundles $\cE = \oplus_{i=1}^{k-1} \cO_{\PP^1} (a_i)$.  The integers $a_i$ are natural invariants of the $k$-gonal curve $X$, known as the \emph{scrollar invariants}.  There are many open questions concerning scrollar invariants.  For example, there is no known classification of which sequences of integers $a_i$ arise as scrollar invariants of $k$-gonal curves.  Even in cases where a curve with given scrollar invariants is known to exist, it is unknown whether the space of such curves is irreducible, or what its dimension is.

In a family of curves of gonality $k$, the scrollar invariants are not lower semicontinuous, but a related set of invariants is.  After ordering the scrollar invariants
\[
a_1 \leq a_2 \leq \cdots \leq a_{k-1},
\]
we define the \emph{composite scrollar invariant} $\sigma_j$ to be the sum of the first $j$ scrollar invariants:
\[
\sigma_j = a_1 + a_2 + \cdots + a_j .
\]
Of course, the scrollar invariants themselves can be recovered from the set of composite scrollar invariants.  The composite scrollar invariants are known to be lower semicontinuous.

In this article, we define tropical analogues of composite scrollar invariants.  Key to our study is the observation that the scrollar invariants are determined by the ranks of the line bundles $\pi^* \cO_{\PP^1}(c)$.  Combining this observation with the Baker-Norine theory of divisors on tropical curves, we obtain definitions of tropical composite scrollar invariants.  We refer the reader to Section~\ref{Sec:Preliminaries} for precise definitions.

We prove that composite scrollar invariants cannot increase under specialization.

\begin{thm}
\label{thm:specialization 1}
Let $X$ be a curve over a nonarchimedean field with skeleton isometric to $\Gamma$, and let $D$ be a divisor of rank 1 on $X$.  Then
\[
\sigma_j (X,D) \geq \sigma_j (\Gamma, \Trop D) \mbox{ for all } j.
\]
\end{thm} 

Having established this relationship between the composite scrollar invariants of a curve and those of its tropicalization, we then compute composite scrollar invariants of certain metric graphs.  Of primary interest to us are the \emph{chains of loops}, a much-studied family of metric graphs that has played a central role in tropical proofs of the Brill-Noether Theorem \cite{CDPR} and the Gieseker-Petri Theorem \cite{tropicalGP}, as well as establishing new results such as the Maximal Rank Conjecture for quadrics \cite{MRC1, MRC2} and an analogue of the Brill-Noether Theorem for curves of fixed gonality \cite{CPJ, DaveDhruv, PfluegerKGonal}.

By varying the edge lengths, we obtain chains of loops of various gonalities.  More precisely, the divisor theory of a chain of loops is determined by its \emph{torsion profile} (see \cref{Def:torsion}).  As the gonality increases, so too does the number of torsion profiles for which the corresponding chain of loops has the given gonality.  We do not have a closed formula for composite scrollar invariants.  Nevertheless, given a torsion profile, we can algorithmically compute the composite scrollar invariants, and we have implemented this algorithm in a Sage program, which can be found on the second author's website:
\begin{center}
\url{https://github.com/kalilajo/numberboxes}.
\end{center}

If $X$ is an algebraic curve and $D$ is a divisor of rank 1 on $X$, then the data of the scrollar invariants is equivalent to that of the sequence of ranks $\rk(cD)$.  More precisely, the sequence of ranks $\rk(cD)$ is a convex, piecewise linear function in $c$, and the scrollar invariants correspond to the ``bends'' between domains of linearity (see \cref{eq:ranksequence}).  For a tropical curve, however, the sequence of ranks is not necessarily convex (see \cref{Ex:Trigonal}).  Because of this, there are several equivalent ways of defining scrollar invariants on algebraic curves that do not agree in the tropical setting.  For example, if $\ell$ is the smallest integer such that $\rk (\ell D) > \ell$, then it is easy to see that $\ell = \sigma_1$.  In \cref{Ex:Trigonal}, however, we exhibit a tropical curve $\Gamma$ and a divisor $D$ of rank 1 such that $\sigma_1 (\Gamma,D)$ is strictly greater than $\ell$.

It is our hope that the study initiated here could be used to resolve outstanding questions concerning scrollar invariants of classical curves.  In order to do this, we would need a lifting result for scrollar invariants.  We pose this as an open question.

\begin{question}
Let $\Gamma$ be a chain of loops, and let $D$ be a divisor of rank 1 on $\Gamma$.  Under what circumstances does there exist a curve $X$, over a nonarchimedean field, with skeleton $\Gamma$ and a rank 1 divisor $D_X$ on $X$ specializing to $D$, such that $\sigma_j (X,D_X) = \sigma_j (\Gamma,D)$?
\end{question}

\subsection*{Acknowledgements}  The first author was supported by NSF DMS-1601896.  The second author was supported by the Graduate Scholars in Mathematics program at the University of Kentucky.  We would like to thank Ralph Morrison and Dhruv Ranganathan for helpful discussions during early stages of this project.  We also thank the anonymous referee for a close reading and many helpful suggestions.

\section{Preliminaries}
\label{Sec:Preliminaries}

\subsection{The Maroni Invariant and Scrollar Invariants}

Let $X$ be a curve of genus $g$ and $\pi: X \to \PP^1$ a dominant map of degree $k \geq 3$.  The map $\pi$ induces a short exact sequence
\[
0 \to \cO_{\PP^1} \to \pi_* \cO_X \to \cE^{\vee} \to 0.
\]
The sheaf $\cE$ is a vector bundle of rank $k-1$ on $\PP^1$, called the \emph{Tschirnhausen bundle} of the map $\pi$.  Since every vector bundle on $\PP^1$ splits as a direct sum of line bundles, we may write
\[
\cE = \bigoplus_{i=1}^{k-1} \cO_{\PP^1} (a_i) .
\]

The integers $a_i$ are known as the \emph{scrollar invariants} of the map $\pi$.  We order them so that
\[
a_1 \leq a_2 \leq \cdots \leq a_{k-1}.
\]
We define the $j$th \emph{composite scrollar invariant} to be the sum of the first $j$ scrollar invariants:
\[
\sigma_j = a_1 + a_2 + \cdots + a_j.
\]

The scrollar invariants determine, and are determined by, the sequence of integers $h^0(X,\pi^*\cO_{\PP^1}(c))$.  Setting $a_0 = 0$, this can be seen by the following calculation:
\begin{align}
\label{eq:ranksequence}
h^0(X,\pi^*\cO_{\PP^1}(c)) &= h^0(\PP^1,\pi_* \cO_X \otimes \cO_{\PP^1}(c)) \\
&= \sum_{i=0}^{k-1} h^0(\PP^1,\cO_{\PP^1}(c-a_i)) \notag \\
&= \sum_{i=0}^{k-1} \max \{ 0, c+1-a_i \}  \notag \\
&= \max \{ (c+1)(j+1) - \sigma_j \} . \notag
\end{align}
Note in particular that $h^0(X,\pi^*\cO_{\PP^1}(c))$ is convex as a function in $c$.

Because $h^0(X,\cO_X) = 1$, we see that each of the scrollar invariants $a_i$ is strictly positive.  Moreover, for $c$ sufficiently large, we have $h^0(X,\pi^*\cO_{\PP^1}(c)) = ck-g+1$, so we see that $\sigma_{k-1} = g+k-1$.

When $k=3$, the scrollar invariants are determined by the single value $a_2 - a_1$, which is known as the \emph{Maroni invariant} of the trigonal curve.  The parity of the Maroni invariant agrees with that of $g$.  The space of trigonal curves with Maroni invariant at least $m$ is known to be nonempty and irreducible if and only if $m \leq \frac{1}{3}(g+2)$ and, except in the case $m=0$, it has codimension $m-1$ in the space of all trigonal curves.

When $k \geq 4$, the situation is more mysterious.  One defines the \emph{Maroni locus} $M(\cE)$ to be the space of $k$-gonal curves with Tschirnhausen bundle isomorphic to $\cE$.  The possible scrollar invariants of tetragonal curves have been classified only recently \cite[Theorem~3.1]{VV24}, and for $k \geq 5$, it is not even known whether $M(\cE)$ is empty for a given a vector bundle $\cE$.

\subsection{Divisor Theory of Metric Graphs}

In this section, we review the theory of divisors on metric graphs.  For more details, we refer the reader to \cite{Baker}.

Recall that a \emph{metric graph} is a compact, connected metric space $\Gamma$ obtained by identifying the edges of a graph $G$ with line segments of fixed positive real length. 

\begin{defn}
A \emph{divisor} $D$ on a metric graph $\Gamma$ is a finite formal $\mathbb{Z}$-linear combination of points of $\Gamma$.  That is, $D=\sum_{v\in \Gamma} D(v)\cdot v$, where $D(v)\in\mathbb{Z}$ is zero for all but finitely many $v$.
\end{defn}

The group of all divisors on a metric graph $\Gamma$ is simply the free abelian group on points of the metric space $\Gamma$, called the \emph{divisor group} $\mathrm{Div}(\Gamma)$ of $\Gamma$.  Divisors on metric graphs should be thought of as the tropical analogues of divisors on algebraic curves.  We now define the tropical analogues of rational functions.

\begin{defn}
A \emph{rational function} on a metric graph $\Gamma$ is a continuous piecewise-linear function $\varphi: \Gamma \to \mathbb{R}$ with integer slopes.  The rational functions on $\Gamma$ form a group under pointwise addition, denoted $\mathrm{PL}(\Gamma)$. Given $\varphi\in \mathrm{PL}(\Gamma)$ and $v\in\Gamma$, we define the \emph{order of vanishing} of $\varphi$
at $v$, $\mathrm{ord}_v(\varphi)$, to be the sum of the incoming slopes of $\varphi$ at $v$.
\end{defn}

Note that $\mathrm{ord}_v(\varphi)$ is nonzero for only finitely many points $v\in\Gamma$.  We define the divisor associated to $\varphi$ as 
\[
\ddiv(\varphi)=\sum_{v\in \Gamma} \mathrm{ord}_v(\varphi)\cdot v .
\]

\begin{defn}
We say that two divisors $D$ and $D'$ on a metric graph $\Gamma$ are \emph{linearly equivalent} if their difference $D-D'$ is equal to $\ddiv(\varphi)$ for some rational function $\varphi\in\mathrm{PL}(\Gamma)$. 
\end{defn}

It is straightforward to show that linear equivalence is in fact an equivalence relation. For our purposes, it suffices to consider linear equivalence classes of divisors. 

A basic invariant of a divisor $D$ is its \emph{degree}, defined to be the integer
\[
\deg(D) = \sum_{v\in \Gamma} D(v).
\]
In analogy with divisors on algebraic curves, we say that a divisor $D$ is \emph{effective} if $D(v) \geq 0$ for all $v \in \Gamma$.  Similarly, we say that a divisor $D$ is \emph{special} if both $D$ and $K_{\Gamma}-D$ are equivalent to effective divisors, where $K_{\Gamma}$ is the canonical divisor
\[
K_{\Gamma} = \sum_{v\in \Gamma}(\mathrm{val}(v)-2)v.
\]

Perhaps the most important invariant of a divisor on a metric graph is its Baker-Norine rank.

\begin{defn}
A divisor $D$ has \emph{rank} at least $r$ if $D-E$ is equivalent to an effective divisor for all effective divisors $E$ of degree $r$. 
\end{defn}

\subsection{Divisors on Chains of Loops}

In Sections~\ref{sec:Tableaux} and~\ref{Sec:Algorithm}, we consider equivalence classes of special divisors on the metric graph pictured in Figure \ref{Fig:CoL}.  This graph, known as the \emph{chain of loops}, has appeared in several articles that use tropical techniques to develop results in algebraic geometry \cite{CPJ, CDPR, tropicalGP, MRC1, MRC2, DaveDhruv, PfluegerKGonal, Pflueger}.  We denote by $v_k$ the point where the $k^{th}$ loop meets a bridge on the left and by $w_k$ the point where the $k^{th}$ loop meets a bridge on the right.  We label edges by their initial and terminal vertices when traversing the loop counter-clockwise.  For example, $w_2v_2$ denotes the top edge of the second loop.
\begin{figure}[h!]
\begin{tikzpicture}

\draw (3,0) circle (1);
\draw (2.3,0) node {\footnotesize $v_1$};
\draw [ball color=black] (2,0) circle (0.5mm);
\draw (3.7,0) node {\footnotesize $w_1$};
\draw [ball color=black] (4,0) circle (0.5mm);

\draw (4,0)--(5,0);
\draw (6,0) circle (1);
\draw [ball color=black] (5,0) circle (0.5mm);
\draw [ball color=black] (7,0) circle (0.5mm);
\draw (5.3,0) node {\footnotesize $v_2$};
\draw (6.7,0) node {\footnotesize $w_2$};
\draw (6,0.75) node {\footnotesize $w_2v_2$};

\draw [dotted] (7,0)--(8,0);
\draw (9,0) circle (1);
\draw [ball color=black] (8,0) circle (0.5mm);
\draw [ball color=black] (10,0) circle (0.5mm);
\draw (8.5,0) node {\footnotesize $v_{g-1}$};
\draw (9.5,0) node {\footnotesize $w_{g-1}$};

\draw (10,0)--(11,0);
\draw (12,0) circle (1);
\draw [ball color=black] (11,0) circle (0.5mm);
\draw [ball color=black] (13,0) circle (0.5mm);
\draw (11.3,0) node {\footnotesize $v_g$};
\draw (12.7,0) node {\footnotesize $w_g$};

\end{tikzpicture}
\caption{A Chain of loops $\Gamma$}
\label{Fig:CoL}
\end{figure}

In this section we summarize the main result of \cite{Pflueger} and draw a few corollaries.

\begin{defn}
\label{Def:torsion}
Let $\ell_i$ denote the length of the $i^{th}$ cycle, and let $\ell(w_iv_i)$ denote the length of the counterclockwise edge from $w_i$ to $v_i$. If $\ell(w_iv_i)$ is an irrational multiple of $\ell_i$, then the \emph{$i^{th}$ torsion order} $m_i$ is 0. Otherwise, $m_i$ is the minimum positive integer such that $m_i\cdot \ell(w_iv_i)$ is an integer multiple of $\ell_i$. We record the torsion order of each loop as the vector $\tp = (m_1, m_2, \ldots, m_g)$, called the \emph{torsion profile} of $\Gamma$. 
\end{defn}

To represent divisors on chains of loops, we use the fact that the Picard group $\textrm{Pic}(\Gamma)$ has a natural coordinate system. Denote by $\langle x\rangle_i$ the point on the $i^{th}$ loop of $\Gamma$ located $x\cdot \ell(w_iv_i)$ units clockwise from $w_i$. Note that $\langle x \rangle_i=\langle y \rangle_i$ if and only if $x\equiv y \Mod {m_i}$. 

By the Tropical Abel-Jacobi theorem \cite{tropicalAJ}, every divisor class $D$ of degree $d$ on $\Gamma$ has a unique \emph{break divisor} representative
\[
D \sim (d-g)w_g + \displaystyle\sum_{i=1}^g \langle\xi_i(D)\rangle_i
\]
for some $\xi_i(D)\in \RR/m_i\ZZ.$ These divisors are our primary object of study.

Recall that a partition $\lambda$ of an integer may be represented as a Young diagram, whose boxes may be filled with symbols to form a Young tableau. We will use the term ``partition" interchangeably with its Young diagram. We denote by $(x,y)$ the box in column $x$ and row $y$ of $\lambda$, where the top left box has coordinates $(0,0).$

\begin{defn}
An \emph{$\tp$-displacement tableau} on a partition $\lambda$ is a function \\$t:\lambda\rightarrow \{1,\ldots, g\}$ such that:
\begin{enumerate}
    \item $t$ increases across each row and column of $\lambda$, and
    \item if $t(x,y)=t(x',y')=i$, then $y-x\equiv y'-x'\Mod{m_i}.$
\end{enumerate}
\label{def:tableau}
\end{defn}
Each such tableau t defines a locus $\TT(t) \subseteq \textrm{Pic}^d(\Gamma)$ homeomorphic to a torus of dimension equal to $g$ minus the number of symbols appearing in $t$.  Specifically,
\[
\TT(t) = \{ D \in \textrm{Pic}^d(\Gamma) \vert \xi_{t(x,y)}(D) \equiv y-x \Mod{m_{t(x,y)}} \text{ for all } (x,y)\in \lambda \}. 
\]
Note that if the function $t$ is not surjective, then there is a symbol $i$ not appearing in the tableau, and a corresponding value $\xi_i$ upon which no restrictions are placed.  

Recall that $W^r_d(\Gamma)$ is the set of all divisor classes of degree $d$ and rank at least $r$ on $\Gamma$.  Pflueger's main result in \cite{Pflueger} is the following.

\begin{thm} \cite{Pflueger}
\label{thm: Pflueger}
Let $\Gamma$ be a chain of loops of genus $g$ and torsion profile $\tp$, and let $r$ and $d$ be positive integers with $r>d-g$.  Let $\lambda$ be the rectangular partition of dimensions $(r+1)\times(g-d+r).$ Then 
\[
W^r_d(\Gamma) = \bigcup_{t}\TT(t),
\]
where $t$ ranges over all $\tp$-displacement tableaux on $\lambda$. 
\end{thm}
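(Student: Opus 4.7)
The plan is to apply Pflueger's theorem (\cref{thm: Pflueger}) to reduce the rank computation to a combinatorial question about displacement tableaux. For the divisor $\dab$ of degree $3$, we have $\rk(c\dab) \geq r$ if and only if there exists a $\tpg$-displacement tableau $t$ on the rectangle of shape $(r+1) \times (g - 3c + r)$ such that $c\dab$ lies in $\TT(t)$, i.e., $\xi_{t(x,y)}(c\dab) \equiv y - x \pmod{m_{t(x,y)}}$ for every cell $(x,y)$. Accordingly, $\rk(c\dab)$ is the largest $r$ for which such a tableau can be constructed.

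First, I would use \cref{cor: trig torsion profile} to make explicit the torsion profile $\tpg$ associated to the trigonal chain of loops with parameters $a$ and $b$, together with the coordinates $\xi_i(\dab)$ of the break divisor representative of $\dab$. Because $\xi_i(c\dab) \equiv c \cdot \xi_i(\dab) \pmod{m_i}$, each symbol $i$ is constrained to appear only on a specific anti-diagonal residue class within the rectangle; the anti-diagonal assigned to symbol $i$ shifts predictably as $c$ increases by one. Moreover, by the second condition of \cref{def:tableau}, the cells carrying symbol $i$ must simultaneously lie on this residue class and form an anti-chain, which severely limits the structure of admissible tableaux.

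The heart of the argument is a parity-dependent construction. For each $c = \ell + i$ with $0 \leq i < n$, I would produce an explicit optimal $\tpg$-displacement tableau whose shape realizes the claimed rank, built inductively from the optimal tableau at step $c - 1$ by adjoining the appropriate number of new rows while deleting columns (three fewer columns is forced by the degree increase of $3$). The key observation is that increasing $c$ by one shifts every allowed anti-diagonal by $\xi_i(\dab) \pmod{m_i}$; I expect that when $i$ is even this shift lines up the admissible anti-diagonals with the boundary of the enlarged rectangle so as to accommodate two new rows of strictly increasing symbols, whereas when $i$ is odd the alignment permits only a single row insertion. The inductive construction together with these alignment calculations yields the lower bound $\rk((\ell+i)\dab) \geq \rk((\ell+i-1)\dab) + 2$ or $+1$ as claimed.

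The main obstacle will be the matching upper bound. For each $i$, I must rule out the existence of a $\tpg$-displacement tableau on the rectangle corresponding to rank one higher than the claimed value — that is, $k=3$ in the even case and $k=2$ in the odd case. This is a combinatorial nonexistence statement, which I would attack by a pigeonhole argument: each symbol $i$ occupies cells only on its anti-diagonal residue class modulo $m_i$, and the strict-increase condition forces these cells to form an anti-chain. I would show, by tracking how many anti-diagonals of the enlarged rectangle can receive each of the $g$ symbols, that the strictly-increasing extension to the forbidden larger rectangle is combinatorially impossible. Making this counting precise, so that the cutoff falls exactly between the claimed rank and one more, is the technical crux of the proof.
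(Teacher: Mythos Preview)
Your proposal does not address the stated theorem at all. \Cref{thm: Pflueger} is Pflueger's classification of $W^r_d(\Gamma)$ for chains of loops; in this paper it is a \emph{cited} result and carries no proof. A proof of it would have to analyze break divisors and rank-determining sets on the chain of loops directly and show that a divisor class has rank at least $r$ precisely when its coordinates $\xi_i$ satisfy the congruences encoded by some $\tp$-displacement tableau on the $(r+1)\times(g-d+r)$ rectangle. Your write-up instead \emph{assumes} \cref{thm: Pflueger} as a black box (``The plan is to apply Pflueger's theorem \ldots'') and then outlines an argument for a completely different statement, namely the parity behaviour
\[
\rk((\ell+i)\dab)=\rk((\ell+i-1)\dab)+\begin{cases}1 & i \text{ odd},\\ 2 & i \text{ even},\end{cases}
\]
which in the paper is \cref{Cor:NotConvex}. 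So as a proof of the displayed theorem there is nothing here.

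Even read as a sketch for \cref{Cor:NotConvex}, your approach diverges from the paper's and leaves the hard step unexplained. The paper does not argue inductively in $c$ or use a pigeonhole count on anti-diagonals. Instead it writes down, once and for all, an explicit closed-form tableau $\lab^c$ (Definition~\ref{def: trigonal lambdac}), proves in \cref{Thm:Efficient} that any tableau $t$ with $c\dab\in\TT(t)$ satisfies $t(x,y)\ge\lab^c(x,y)$ entrywise, and then reads off the exact rank formula of \cref{Cor:Ranks}; the parity statement drops out immediately. The entrywise minimality argument is what replaces your proposed ``combinatorial nonexistence'' step, and it is both sharper and more concrete than a pigeonhole bound. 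Your sketch also contains a factual slip: you speak of ``adjoining the appropriate number of new rows while deleting columns,'' but passing from $c-1$ to $c$ shrinks the rectangle by three rows and \emph{adds} columns (the rank increases, so the width $r+1$ grows; the height $g-3c+r$ drops).
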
 

\begin{cor}
\label{cor: k-gonal}
A chain of loops with torsion profile $\tp$ has gonality $k$ if and only if there is an \tp-displacement tableau on a rectangle $\lambda$ of dimensions $(g-k+1) \times 2$ and no such tableau on a rectangle of dimensions $(g-k+2) \times 2$.  
\end{cor}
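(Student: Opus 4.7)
The plan is to deduce the corollary directly from Theorem~\ref{thm: Pflueger}, specialized to $r = 1$ and applied with $d = k$ and $d = k - 1$.

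By definition, $\Gamma$ has gonality $k$ if and only if $W^1_k(\Gamma) \neq \emptyset$ and $W^1_{k-1}(\Gamma) = \emptyset$. First I will confirm that Pflueger's hypothesis $r > d - g$ holds in both relevant cases: a chain of loops of genus $g \geq 2$ has gonality $k \leq g$, so both $(r,d) = (1,k)$ and $(r,d) = (1,k-1)$ are permitted. Applying the theorem with $(r,d) = (1,k)$, the relevant rectangle has dimensions $(r+1) \times (g-d+r) = 2 \times (g-k+1)$, and we obtain: $W^1_k(\Gamma) \neq \emptyset$ if and only if a $\tp$-displacement tableau exists on a $2 \times (g-k+1)$ rectangle. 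Applying it with $(r,d) = (1,k-1)$ gives: $W^1_{k-1}(\Gamma) = \emptyset$ if and only if no $\tp$-displacement tableau exists on a $2 \times (g-k+2)$ rectangle.

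To match the rectangles $(g-k+1) \times 2$ and $(g-k+2) \times 2$ appearing in the corollary's statement, I will observe that transposition $(x,y) \mapsto (y,x)$ induces a bijection between $\tp$-displacement tableaux on an $a \times b$ rectangle and those on a $b \times a$ rectangle. The row/column-increasing condition of \cref{def:tableau} is plainly symmetric under this swap, and the congruence $y - x \equiv y' - x' \Mod{m_i}$ is equivalent to $x - y \equiv x' - y' \Mod{m_i}$, so the displacement condition is preserved as well.

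Combining these observations yields the corollary. There is no substantive obstacle here: the statement is essentially a specialization of Pflueger's theorem to the case $r = 1$, together with the elementary transposition symmetry of $\tp$-displacement tableaux on rectangles. The only thing one must be careful about is verifying the hypothesis of Pflueger's theorem and correctly tracking the convention for rectangle orientation.
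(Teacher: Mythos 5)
Your proof is correct and follows exactly the route the paper intends: the corollary is stated without proof as an immediate consequence of Theorem~\ref{thm: Pflueger}, obtained by taking $r=1$ with $d=k$ and $d=k-1$ and noting that each $\TT(t)$ is a nonempty torus. Your extra care about the hypothesis $r>d-g$ and the row/column convention is harmless and correct, but not a substantive difference.
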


The following lemma will prove to be a crucial step in our analysis of scrollar invariants on chains of loops.

\begin{lema}
\label{lemma: xi}
 Given a divisor $D$ on $\Gamma$, denote by $\xi_i^c:=\xi_i^c(D)$ the coordinate on the $i^{th}$ loop of $\Gamma$ in the break divisor representative of $cD$. Then $\xi^{c+1}_i = \xi_i^{c}+\xi_i^1 - (i-1)$. It follows by induction on $c$ that $\xi_i^c=c\;\xi_i^1-(c-1)(i-1)$.
\end{lema}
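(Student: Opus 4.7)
The plan is to prove the recurrence by producing an explicit rational function $\Phi$ witnessing the linear equivalence between $cD+D$ and the proposed break divisor of $(c+1)D$. Summing the break forms of $cD$ and $D$ yields
\[
cD+D \sim \bigl((c{+}1)d-2g\bigr)\,w_g+\sum_{i=1}^{g}\bigl(\langle\xi_i^c\rangle_i+\langle\xi_i^1\rangle_i\bigr),
\]
which has two chips on each loop and a deficit of $g$ chips at $w_g$ relative to the desired break form $\bigl((c{+}1)d-g\bigr)\,w_g+\sum_i\langle\xi_i^c+\xi_i^1-(i-1)\rangle_i$.

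I would assemble $\Phi$ loop by loop, from left to right. For each loop $i$, the key observation is that on the circle $\mathrm{Pic}^0$ of loop $i$ one has the principal relation
\[
\langle\xi_i^c\rangle_i+\langle\xi_i^1\rangle_i+(i-1)\langle-1\rangle_i \sim \langle\xi_i^c+\xi_i^1-(i-1)\rangle_i+i\langle 0\rangle_i,
\]
since both sides are effective divisors of degree $i+1$ with equal image $(\xi_i^c+\xi_i^1-(i-1))\ell(w_iv_i)$ in the circle's Picard group (recall $v_i=\langle-1\rangle_i$ and $w_i=\langle 0\rangle_i$). Let $\varphi_i$ realize this on $\Gamma$, extended by (possibly different) constants outside loop $i$. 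Let $\psi_i$ (for $i<g$) be the bridge function with $\operatorname{div}(\psi_i)=[v_{i+1}]-[w_i]$, linear with slope $+1$ on the bridge and constant elsewhere. Setting $\Phi=\sum_{i=1}^g\varphi_i+\sum_{i=1}^{g-1}i\,\psi_i$, a direct bookkeeping check shows that the contributions at $v_j$ and $w_j$ for $j<g$ telescope away, leaving $\operatorname{div}(\Phi)=g\,w_g+\sum_i\langle\xi_i^{c+1}\rangle_i-\sum_i(\langle\xi_i^c\rangle_i+\langle\xi_i^1\rangle_i)$, which is exactly $(c+1)D-(cD+D)$.

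The inductive picture is that after processing loop $i$, one chip at position $\xi_j^c+\xi_j^1-(j-1)$ sits on each of loops $j=1,\ldots,i$, and $i$ exported chips have accumulated at $v_{i+1}$. The $(i-1)$ accumulated chips at $v_i=\langle-1\rangle_i$ are precisely what produces the $-(i-1)$ shift on loop $i$, and at loop $g$ the $g$ exported chips land at $w_g$ itself, filling the deficit. The main obstacle is verifying that each $\varphi_i$ is a bona fide rational function on $\Gamma$ with integer slopes: the circle-closure condition $\int s\,dt=0$ determines the starting slope uniquely, and integrality holds because all orders appearing in $\operatorname{div}(\varphi_i)$ are integers and the chip positions are commensurate with the $\langle\cdot\rangle_i$-coordinate system; extending by constants on either side of loop $i$ then produces no spurious divisor contributions at any vertex away from loop $i$.
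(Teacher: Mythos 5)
Your argument is correct, but it takes a genuinely different route from the paper. The paper's proof is essentially a one-line reduction: it cites Pflueger's Remark~3.4, which asserts that the shifted coordinate $\widetilde{\xi}_i := \xi_i - (i-1)$ is linear (i.e.\ additive on divisor classes), and then the recurrence $\xi_i^{c+1} = \xi_i^c + \xi_i^1 - (i-1)$ falls out of a three-line computation. You instead reprove this additivity from scratch by exhibiting an explicit piecewise-linear function $\Phi$ realizing the equivalence between $cD + D$ (as a sum of two break divisors) and the claimed break divisor of $(c+1)D$. Your bookkeeping checks out: the local relation on each circle is correct because both sides are effective of degree $i+1$ with equal Abel--Jacobi image, the $w_j$ and $v_j$ contributions telescope against the bridge functions $j\,\psi_j$ exactly as you say, the $g$ exported chips land at $w_g$ to fill the deficit, and the closure condition $\int s\,dt = 0$ on each circle does force an integer base slope precisely because the divisor of $\varphi_i$ is principal on that circle. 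Two small points worth making explicit: the resulting identity holds in $\RR/m_i\ZZ$ (which is where $\xi_i$ lives, so this is consistent with the statement), and you are implicitly using the uniqueness of break divisor representatives to conclude that the divisor you produce \emph{is} the break form of $(c+1)D$. What your approach buys is transparency -- it explains \emph{why} the shift is $-(i-1)$, namely the $i-1$ chips that accumulate at $v_i$ after processing the loops to the left -- at the cost of length; the paper's citation of Pflueger is far shorter but leaves that mechanism opaque.
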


\begin{proof}
By \cite[Remark~3.4]{Pflueger}, the function 
\[
\widetilde{\xi}_i := \xi_i - (i-1)
\]
is linear.  This gives 
\begin{align*}
       \xi_i^{c+1} & =i-1+\widetilde{\xi}_i^{c+1}\\
       & = i-1+\widetilde{\xi}_i^c+\widetilde{\xi}_i^1\\
       & = i-1+ \xi_i^c - (i-1) + \xi_i^1 - (i-1)\\
       & = \xi_i^c + \xi_i^1 - (i-1).
\end{align*}
\end{proof}

\subsection{Specialization}

The theory of divisors on metric graphs informs that of algebraic curves via \emph{specialization}.  Here, we recall the basic properties of specialization.  We refer the reader to \cite{Baker} for details.  Let $K$ be an algebraically closed field that is complete with respect to a nontrivial valuation
\[
\mathrm{val} : X \to \mathbb{R}^* .
\]
Let $X$ be an algebraic curve over $K$.  A \emph{skeleton} of $X$ is a certain type of subset of the set of valuations on the function field $K(X)$ that extend the given valuation on $K$. A skeleton of $X$ is endowed with a topology, giving it the structure of a metric graph.  There is a natural map from $X$ to its skeleton $\Gamma$.  Extending linearly yields the tropicalization map on divisors
\[
\Trop : \mathrm{Div} (X) \to \mathrm{Div} (\Gamma).
\]

The tropicalization map satisfies an important property, known as Baker's Specialization Lemma.

\begin{lema} \cite{Baker}
Let $D_X$ be a divisor on $X$.  Then
\[
\rk (D_X) \leq \rk (\Trop D_X).
\]
\end{lema}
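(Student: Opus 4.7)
The plan is to verify the rank inequality directly from the definition. Set $r = \rk(D_X)$; if $r=-1$ the statement is vacuous, so assume $r \geq 0$. I need to show that for every effective divisor $E$ of degree $r$ on $\Gamma$, the divisor $\Trop D_X - E$ is linearly equivalent to an effective divisor on $\Gamma$. The strategy has three steps: lift $E$ to an effective divisor on $X$, invoke the rank hypothesis there, and push the resulting linear equivalence back down to $\Gamma$ via tropicalization.

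The first step requires a lifting result for points: the retraction $X(K) \to \Gamma$ is surjective, so every point of $\Gamma$ is the tropicalization of some closed point of $X$. Granted this, any effective divisor $E = \sum n_i v_i$ on $\Gamma$ lifts to an effective divisor $E_X = \sum n_i x_i$ on $X$ with $\Trop E_X = E$. This surjectivity is the technical heart of the argument; it is a nontrivial input from the theory of Berkovich analytic spaces, relying on the construction of the skeleton of $X$ via a semistable model. I expect this to be the main obstacle, since everything else reduces to formal manipulation once it is in hand.

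With $E_X$ in place, the rank hypothesis produces an effective divisor $F_X$ on $X$ with $D_X - E_X = F_X + \ddiv(f)$ for some $f \in K(X)^\times$. The final ingredient is the compatibility between tropicalization and principal divisors: the function $\trop f : \Gamma \to \RR$ sending $v \mapsto -v(f)$ (where $v$ is regarded as a valuation on $K(X)$) is piecewise linear with integer slopes, and its divisor on $\Gamma$ equals $\Trop(\ddiv(f))$. This is a non-archimedean Poincar\'e--Lelong formula, proved by a local slope computation at each vertex of the skeleton using the semistable model. Applying it yields
\[
\Trop D_X - E = \Trop F_X + \ddiv(\trop f),
\]
so $\Trop D_X - E$ is linearly equivalent to the effective divisor $\Trop F_X$. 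Since $E$ was arbitrary, this shows $\rk(\Trop D_X) \geq r$, which is the desired inequality.
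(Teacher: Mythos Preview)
Your sketch is a correct outline of the standard proof of Baker's Specialization Lemma: lift an effective test divisor $E$ on $\Gamma$ to $X$ using surjectivity of the retraction, apply the rank hypothesis on $X$ to get $D_X - E_X \sim F_X \geq 0$, and push the linear equivalence back to $\Gamma$ via the slope formula (non-archimedean Poincar\'e--Lelong), yielding $\Trop D_X - E \sim \Trop F_X \geq 0$. The two technical inputs you flag---surjectivity of $X(K) \to \Gamma$ and $\Trop(\ddiv f) = \ddiv(\trop f)$---are indeed the substantive points, and both hold in the setting the paper works in (algebraically closed, complete, nontrivially valued $K$).

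Note, however, that the paper does not actually prove this lemma at all: it is stated with the citation \cite{Baker} and used as a black box. So there is no ``paper's own proof'' to compare against; your argument is simply the proof that the cited reference supplies. In that sense your proposal goes beyond what the paper does, and is consistent with the original source.
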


\section{Specialization for Composite Scrollar Invariants}
\label{Sec:Special}

We now define composite scrollar invariants of divisors on metric graphs.

\begin{defn}
Let $\Gamma$ be a metric graph and $D$ a divisor of rank 1 on $\Gamma$.  We define the $j$th \emph{composite scrollar invariant} of the pair $(\Gamma,D)$ to be
\[
\sigma_j (\Gamma,D) := \min \{ m \mid \rk(cD) \geq (c+1)(j+1) -(m+1) \text{ for all } c \} .
\]
\end{defn}

Note that $\rk(cD) \geq c$ for all $c$, with equality if $c=0$, so $\sigma_0 = 0.$  If $\deg (D) = k$, then by Riemann-Roch, we have $\rk(cD) \geq ck-g$ with equality if $c$ is sufficiently large, so $\sigma_{k-1} = g+k-1$.

As mentioned in the introduction, there are several other ways we could define tropical analogues of these invariants.  For example, we could define $\sigma_1$ to be the minimum value of $c$ such that $\rk(cD) > c$.  For algebraic curves, these two definitions of $\sigma_1$ agree because the rank sequence $\rk(cD)$ is convex as a function in $c$.   For metric graphs, however, the rank sequence is not necessarily convex, so these two definitions do not agree.

We now prove a specialization lemma for composite scrollar invariants.

\begin{proof}[Proof of \cref{thm:specialization 1}]
By \cref{eq:ranksequence}, for any value of $j$ we have
\[
\rk(cD) \geq (c+1)(j+1) - (\sigma_j (X,D) + 1).
\]
Simultaneously, by Baker's Specialization Lemma, we have
\[
\rk(cD) \leq \rk(c\Trop D) \mbox{ for all } c.
\]
It follows that
\[
\rk (c\Trop D) \geq (c+1)(j+1) - (\sigma_j (X,D) + 1) \mbox{ for all } c.
\]

Since $\sigma_j (\Gamma, \Trop D)$ is defined to be the minimum value of $m$ such that
\[
\rk (c\Trop D) \geq (c+1)(j+1) - (m+1) \mbox{ for all } c,
\]
we see that
\[
\sigma_j (\Gamma, \Trop D) \leq \sigma_j (X,D) .
\]
\end{proof}

\section{Chains of Loops of Fixed Gonality}
\label{sec:Tableaux}
For the remainder of the paper, we compute composite scrollar invariants for a specific family of tropical curves, the chains of loops.  In this section, we classify chains of loops based on their gonality. 
We begin with the following observation.

\begin{lema}
\label{lemma: Lambda}
The following is the unique displacement tableau $\Lambda$ on the rectangular partition $(g-1)\times 2$.
\begin{center}
    \begin{tabular}{| c | c |}
    \hline
    1&2 \\ \hline
    2&3 \\ \hline
    3&4 \\ \hline
    \vdots&\vdots \\ \hline
    g-2&g-1 \\ \hline
    g-1&g\\
    \hline
    \end{tabular}
\end{center}
\end{lema}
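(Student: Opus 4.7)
The plan is a straightforward double-squeeze on the entries. Write $t(i,j)$ for the entry in row $i$ and column $j$ of any tableau on the $(g-1)\times 2$ rectangle, where $1 \leq i \leq g-1$ and $j \in \{1,2\}$, and recall that $t$ takes values in $\{1, \ldots, g\}$ and is strictly increasing along both rows and columns.

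First I would establish lower bounds starting from the top-left corner. The condition $t(1,1) \geq 1$ together with strict column-increase yields $t(i,1) \geq i$ by induction on $i$, and then strict row-increase gives $t(i,2) \geq i+1$.

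Next I would obtain matching upper bounds starting from the bottom-right corner. The constraint $t(g-1,2) \leq g$ and strict column-increase give $t(i,2) \leq i+1$ by reverse induction on $i$, and then strict row-increase gives $t(i,1) \leq i$. Combining the two sets of bounds forces $t(i,1) = i$ and $t(i,2) = i+1$ throughout, which is exactly $\Lambda$.

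There is no substantive obstacle here: the argument is a one-line pigeonhole powered by the size of the alphabet $\{1,\ldots,g\}$ relative to the shape. It is worth remarking that the uniqueness already holds at the level of functions satisfying condition (1) of \cref{def:tableau} alone; the torsion-modular condition (2) is not used. This is precisely the structural point that will be exploited in the rest of the section: determining whether a chain of loops of torsion profile $\tp$ fails to admit an $\tp$-displacement tableau on $(g-1)\times 2$ reduces to checking whether the unique candidate $\Lambda$ violates condition (2) for the given $\tp$.
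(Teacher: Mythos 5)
Your proof is correct and takes essentially the same approach as the paper's: both arguments exploit only the strict row/column increase together with the fact that the alphabet $\{1,\ldots,g\}$ barely exceeds the column length, the paper phrasing this as a pigeonhole on the set of symbols in each column and you phrasing it as a two-sided inequality squeeze on $t(i,j)$. Your closing observation that condition (2) of \cref{def:tableau} plays no role here, and that the torsion profile enters only when checking whether $\Lambda$ itself is an $\tp$-displacement tableau, accurately anticipates how the lemma is used in \cref{cor: helltab}.
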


\begin{proof}
The boxes of $\Lambda$ must contain integers between 1 and $g$ so that the entries strictly increase in each row and column.  There cannot be a $g$ in the zeroth column, since the box to the right of it must contain a larger number.  Similarly, there cannot be a 1 in the first column.  This leaves exactly $g-1$ distinct symbols that may appear in each column, which must appear in increasing order.  This yields the above tableau.
\end{proof}

By \cref{lemma: Lambda}, we see that the torsion profile of a hyperelliptic chain of loops is essentially unique.

\begin{cor}
\label{cor: helltab}
A chain of loops $\Gamma$ is hyperelliptic if and only if its torsion profile (termwise) divides $\tp=(0,2,2,\ldots,2,0)$. In this case, there is a divisor $D$ on $\Gamma$ of degree 2 and rank 1 whose corresponding tableau is $\Lambda$.  
\end{cor}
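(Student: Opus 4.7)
The plan is to apply \cref{cor: k-gonal} with $k=2$: hyperellipticity is equivalent to the existence of an $\tpg$-displacement tableau on a $(g-1)\times 2$ rectangle, together with the nonexistence of one on a $g \times 2$ rectangle.

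I first dispose of the $g \times 2$ condition, which I claim holds automatically---no standard tableau on $g \times 2$ with entries in $\{1, \ldots, g\}$ exists at all, regardless of torsion profile. Indeed, the entry at $(0,1)$ must strictly exceed the entry at $(0,0) \geq 1$, so it is at least $2$; strict increase down the second column then forces the entry at $(g-1, 1)$ to be at least $g+1$, contradicting the bound $\leq g$. Thus the question collapses to when a tableau exists on $(g-1) \times 2$.

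By \cref{lemma: Lambda}, the only candidate is $\Lambda$, so it remains to determine when $\Lambda$ satisfies condition (2) of \cref{def:tableau}. The symbols $1$ and $g$ each appear only once in $\Lambda$ and impose no torsion constraint. For each $i$ with $2 \leq i \leq g-1$, the symbol $i$ sits at the two cells $(i-1, 0)$ and $(i-2, 1)$, whose $y - x$ values differ by exactly $2$. So condition (2) reduces to $m_i \mid 2$ for each such $i$, which---adopting the convention that every integer divides $0$---is precisely the statement that $\tpg$ termwise divides $(0, 2, 2, \ldots, 2, 0)$.

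For the closing ``in this case'' clause, once $\Lambda$ is a valid $\tpg$-displacement tableau, \cref{thm: Pflueger} (applied with $r=1$, $d=2$) gives $\TT(\Lambda) \subseteq W^1_2(\Gamma)$, so any $D \in \TT(\Lambda)$ is a divisor of degree $2$ and rank at least $1$ whose associated tableau is $\Lambda$. The only real obstacles in this argument are bookkeeping: reconciling the $(g-1) \times 2$ orientation used in \cref{lemma: Lambda} with the $(r+1) \times (g - d + r)$ shape of \cref{thm: Pflueger}, and handling ``$m_i \mid 0$'' consistently.
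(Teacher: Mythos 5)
Your proof is correct and follows essentially the same route as the paper's: reduce via \cref{cor: k-gonal} to the existence of an $\tp$-displacement tableau on the $(g-1)\times 2$ rectangle, invoke \cref{lemma: Lambda} for uniqueness of $\Lambda$, and read off the torsion conditions from the two occurrences of each symbol $1<i<g$ being lattice distance $2$ apart. The only differences are that you explicitly verify the nonexistence of a tableau on the $g\times 2$ rectangle and justify the final clause via \cref{thm: Pflueger}, both of which the paper leaves implicit.
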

    
\begin{proof}
By \cref{cor: k-gonal}, $\Gamma$ is hyperelliptic if and only if there is an $\tp$-displacement tableau on a rectangle of dimensions $(g-1)\times2$.
   
By \cref{lemma: Lambda}, we see that $\Lambda$ is the unique tableau on a $(g-1)\times2$ rectangle.  Since the symbols 1 and $g$ appear only once, $\Lambda$ imposes no conditions on the torsion of the first or last loops of $\Gamma$.  Each symbol $i$ in the range $1<i<g$ appears twice in $\Lambda$, in boxes $(0,i-1)$ and $(1,i-2)$, which are lattice distance 2 from each other.  Thus we must have $m_i=2$ and the torsion profile of $\Gamma$ is as above.
\end{proof}

We denote by $\lab$ the tableau on the rectangular partition $(g-2) \times 2$ obtained by deleting boxes $(1,a-2)$ and $(0,b-1)$ from $\Lambda$, and sliding the remaining boxes together vertically.  Note that the symbols appearing in these boxes are $a$ and $b$, respectively.  This defines a tableau if and only if $b \geq a-1$.  Tableaux of the form $\lab$ are of interest for the following reason.

\begin{prop}
\label{prop: trig tableaux}
All displacement tableaux on a rectangle $\lambda$ of dimensions $(g-2)\times 2$ are of the form $\lab$ for some $b \geq a-1$.
\end{prop}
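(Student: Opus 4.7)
My plan is to characterize an arbitrary tableau $t$ on the $(g-2)\times 2$ rectangle by the set of entries appearing in each of its two columns.  Since the entries strictly increase along both rows and columns and take values in $\{1,\ldots,g\}$, each column consists of $g-2$ distinct values.  The first step is to observe, exactly as in the proof of \cref{lemma: Lambda}, that the symbol $g$ cannot appear in the zeroth column of $t$ (else the box to its right would demand a value strictly greater than $g$) and the symbol $1$ cannot appear in the first column.  Consequently, the zeroth column is a $(g-2)$-element subset of $\{1,\ldots,g-1\}$ missing some unique value $b \in \{1,\ldots,g-1\}$, and the first column is a $(g-2)$-element subset of $\{2,\ldots,g\}$ missing some unique value $a \in \{2,\ldots,g\}$.

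Because the entries within each column are arranged in increasing order, $a$ and $b$ determine $t$ completely.  Explicitly, the entry in box $(0,i)$ equals $i+1$ if $i < b-1$ and $i+2$ if $i \geq b-1$, while the entry in box $(1,i)$ equals $i+2$ if $i < a-2$ and $i+3$ if $i \geq a-2$.  Comparing with $\Lambda$, these are precisely the entries produced by deleting the boxes $(1,a-2)$ and $(0,b-1)$ from $\Lambda$ and shifting the boxes below each deleted box upward to close the gap, so $t = \lab$.

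The final step is to confirm the inequality $b \geq a-1$.  The row-increasing requirement amounts to comparing the zeroth- and first-column entries in each row, and the four cases determined by whether $i<b-1$ or $i\geq b-1$ and whether $i<a-2$ or $i\geq a-2$ all yield strict inequality except when $i\geq b-1$ and $i<a-2$, in which case both entries equal $i+2$.  Such an $i$ exists precisely when $b-1 < a-2$, so excluding this offending row is equivalent to requiring $b \geq a-1$.  I do not foresee a substantive obstacle in this argument; the only care needed is in the index bookkeeping between the shifted and unshifted row coordinates, which is easily managed by tabulating the four cases side by side.
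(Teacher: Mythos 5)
Your argument is correct and follows essentially the same route as the paper's proof: identify the unique symbol missing from each column (using that $g$ cannot sit in column $0$ and $1$ cannot sit in column $1$), and then derive $b \geq a-1$ from the row-increasing condition. The only difference is that you spell out explicitly, via the four-case tabulation, the step the paper summarizes as ``the missing box in the first column may not be strictly below the missing box in the zeroth column.''
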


\begin{proof} 
Let $t$ be a displacement tableau on $\lambda$.  We must show that $t = \lab$ for some $b \geq a-1$.  Note that $t$ has $g-2$ distinct entries in each column, which must be between $1$ and $g$.  As in \cref{cor: helltab}, there may not be a $g$ in the zeroth column or a $1$ in the first column, so there is exactly one integer ``missing" from each column.  Let $b$ be the integer that is missing from the zeroth column, and let $a$ be the integer that is missing from the first column.  Moreover, note that the missing box in the first column may not be strictly below the missing box in the zeroth column.  From this, we achieve the desired result. 
\end{proof}

\cref{prop: trig tableaux} allows us to classify trigonal chains of loops.

\begin{cor}
\label{cor: trig torsion profile}
A chain of loops $\Gamma$ is trigonal if and only if it is not hyperelliptic, and has torsion profile that (termwise) divides 
\[
\tp=( 0, 2,\ldots, 2, 0, 3, \ldots, 3, 0, 2, \ldots, 2, 0).
\]  
\end{cor}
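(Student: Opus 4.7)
The plan is to apply \cref{cor: k-gonal} with $k = 3$: the graph $\Gamma$ is trigonal precisely when there exists an $\tp$-displacement tableau on the rectangle of dimensions $(g-2)\times 2$ but none on the rectangle of dimensions $(g-1)\times 2$. The absence of a tableau on the larger rectangle is exactly the statement that $\Gamma$ is not hyperelliptic, by \cref{cor: helltab}. It therefore suffices to characterize the torsion profiles for which some displacement tableau exists on the $(g-2)\times 2$ rectangle.

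By \cref{prop: trig tableaux}, every such tableau has the form $\lab$ for some pair of integers $b \geq a-1$, so I will fix such a pair and read off the congruences on $m_i$ that $\lab$ imposes. For each $i \in \{2,\ldots,g-1\}\setminus\{a,b\}$, the symbol $i$ appears in exactly one box of each column of $\lab$. After deleting the boxes $(1,a-2)$ and $(0,b-1)$ from $\Lambda$ and reassembling the result as a $(g-2)\times 2$ rectangle, the row $y_0$ of $i$ in column $0$ is $i-1$ if $i < b$ and $i-2$ if $i > b$, while the row $y_1$ of $i$ in column $1$ is $i-2$ if $i < a$ and $i-3$ if $i > a$. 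Substituting these into the congruence $y_0 \equiv y_1 - 1 \pmod{m_i}$ from \cref{def:tableau}, and assuming without loss of generality that $a \leq b$, three regimes emerge: when $i < a$ or $i > b$ the congruence simplifies to $m_i \mid 2$; when $a < i < b$ it becomes $m_i \mid 3$; and for $i \in \{1,a,b,g\}$ the symbol appears only once in $\lab$, so no congruence is imposed.

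These constraints assemble exactly to a torsion profile matching the pattern in the statement, with zeros at positions $1, a, b, g$, twos on the outer regions, and threes strictly between $a$ and $b$. The marginal cases $a = b$ and $b = a-1$ correspond to the two interior zeros coinciding or lying adjacent to one another, respectively, with no threes between them, and both remain consistent with the statement's pattern. For the converse, any torsion profile that termwise divides such a pattern satisfies exactly the congruences required to make the corresponding $\lab$ a valid displacement tableau, and combining this with the non-hyperelliptic hypothesis upgrades the resulting bound gonality $\leq 3$ to an equality.

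I expect the main difficulty to be the bookkeeping in the case analysis, namely correctly tracking which row each repeated symbol occupies in each column once the two boxes have been removed from $\Lambda$ and the shape has been reassembled. Once these row positions are pinned down, the three regimes $m_i \mid 2$, $m_i \mid 3$, and no-constraint drop out by direct substitution into the displacement congruence.
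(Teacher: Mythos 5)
Your proposal is correct and follows essentially the same route as the paper: reduce via \cref{cor: k-gonal} and \cref{cor: helltab} to the existence of a tableau on the $(g-2)\times 2$ rectangle, invoke \cref{prop: trig tableaux} to restrict to the tableaux $\lab$, and read off the conditions $m_i \mid 2$ and $m_i \mid 3$ from the two positions of each repeated symbol. Your explicit row-index substitution into the displacement congruence is just the paper's ``lattice distance'' computation written out, and your handling of the marginal cases $a=b$ and $b=a-1$ is consistent with the statement.
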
    

\begin{proof}
By \cref{cor: k-gonal}, $\Gamma$ is trigonal if and only if there is an $\tp$-displacement tableau on a rectangle $\lambda$ of dimensions $(g-2)\times 2$ and none on a rectangle of dimensions $(g-1)\times2$. 
By \cref{prop: trig tableaux}, every tableau on $\lambda$ is of the form $\lab$ for some $a$ and $b$.  The tableau $\lab$ imposes no conditions on the torsion of loops 1, $a$, $b$, and $g$, but the torsion of each other loop is determined by the tableau.  In particular, if $i<a$, the symbol $i$ appears twice in $\lab$, both in boxes $(0,i-1)$ and $(1,i-2)$.  These boxes are lattice distance 2 from each other, so we must have $m_i=2$.  In the same way, $m_i=2$ for symbols $i$ in the range $b<i<g$.  Similarly, if $a<i<b$, the symbol $i$ appears in boxes $(0,i-1)$ and $(1,i-3)$, which are lattice distance 3 apart, so $m_i=3$.  
\end{proof}  

For higher gonalities, we have the following natural generalization of of \cref{prop: trig tableaux}. 
    
\begin{prop}
Every displacement tableau on $(g-k+1)\times 2$ may be obtained by removing $k-2$ boxes from each column of $\Lambda$ (as defined in \cref{lemma: Lambda}) in such a way that, above any row, the number of boxes deleted from the left column of $\Lambda$ does not exceed the number of boxes deleted from the right column. 
\label{prop: ktableaux}
\end{prop}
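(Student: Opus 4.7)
The plan is to translate both the tableau condition and the removal condition into the same cumulative-count inequality, then match them. First, I would set up the correspondence. Any tableau $t$ on the $(g-k+1)\times 2$ rectangle has entries in $\{1,\ldots,g\}$ with strictly increasing rows and columns, so its left column lies in $\{1,\ldots,g-1\}$ and its right column in $\{2,\ldots,g\}$. Since each column of $t$ contains $g-k+1$ entries, the tableau is uniquely determined by the sets $L^c \subseteq \{1,\ldots,g-1\}$ and $R^c \subseteq \{2,\ldots,g\}$ of removed values, each of cardinality $(g-1)-(g-k+1) = k-2$. Under the labeling of $\Lambda$ given in \cref{lemma: Lambda}, a value $v \in L^c$ corresponds to the removed box at row $v-1$ of the left column, and $v \in R^c$ corresponds to the removed box at row $v-2$ of the right column.

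Second, I would reformulate the tableau condition. Writing the kept values in increasing order as $l_1 < \cdots < l_{g-k+1}$ and $r_1 < \cdots < r_{g-k+1}$, the rows of $t$ strictly increase if and only if $l_i \leq r_i - 1$ for all $i$. By the standard equivalence for sorted sequences (a routine matching argument), this holds if and only if
\[
|L \cap \{1,\ldots,m\}| \;\geq\; |R \cap \{2,\ldots,m+1\}| \quad \text{for every } m \geq 0.
\]
Passing to complements within the $m$-element sets $\{1,\ldots,m\}$ and $\{2,\ldots,m+1\}$, this rewrites as $|L^c \cap \{1,\ldots,m\}| \leq |R^c \cap \{2,\ldots,m+1\}|$ for every $m \geq 0$.

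Finally, I would identify this inequality with the removal condition. By the row-versus-value dictionary above, $|L^c \cap \{1,\ldots,m\}|$ is exactly the number of boxes removed from the left column of $\Lambda$ in rows $0,1,\ldots,m-1$, and $|R^c \cap \{2,\ldots,m+1\}|$ counts the removals from the right column in the same range of rows. Thus the cumulative inequality is literally the statement that, above any row, at most as many boxes of $\Lambda$ have been deleted from the left column as from the right, which is the content of the proposition.

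The main obstacle I expect is the off-by-one bookkeeping between ``removed value $v$'' and ``removed row $v-1$'' on the left versus ``removed row $v-2$'' on the right, which is easy to mangle; the sorted-sequence equivalence invoked in the second step is otherwise textbook and presents no real difficulty.
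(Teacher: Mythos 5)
Your proof is correct and follows essentially the same route as the paper: identify each column of the tableau with the complement of a $(k-2)$-element set of deleted symbols from the corresponding column of $\Lambda$, and translate the row-increase condition into the cumulative deletion inequality. You simply make explicit (via the sorted-sequence/counting equivalence) the step the paper summarizes as ``requiring the entries in each row to increase exactly recovers our condition on the boxes removed.''
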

    
\begin{proof}
Consider the result $\lambda$ of removing $k-2$ boxes from each column of $\Lambda$ as described and sliding the remaining boxes together vertically.  This forms a rectangle of dimensions $(g-k+1)\times2$, and the condition on removed boxes guarantees that the entries in each row are increasing.  
     
It remains to show that every displacement tableau $t$ on $\lambda$ can be obtained in this way.  For any such tableau, note that each column of $t$ must have $g-k+1 = g-1-(k-2)$ distinct entries, which must be between 1 and $g$. By the definition of tableaux, there may not be a 1 in the first column of $t$ or a $g$ in the zeroth column, so each column contains all but $k-2$ of the symbols that appear in the corresponding column of $\Lambda$.  In other words, the entries in each column may be obtained by deleting $k-2$ of the entries in the corresponding column of $\Lambda$.  Requiring the entries in each row to increase exactly recovers our condition on the boxes removed, and the result follows. 
\end{proof}

\begin{ex}
\label{Ex:Process}
\cref{fig: Build D} illustrates this process.  The resulting tableau corresponds to a divisor of rank 1 and degree 5 on a chain of loops of genus 15.

\begin{figure}[h]
    \centering
    \begin{tikzpicture}[scale=0.45]
        \draw (2,-3) node {1};
        \draw (2,-4) node {2};
        \draw (3,-3) node {2};
        \draw (2,-5) node {3};
        \draw (3,-4) node {3};
        \draw (2,-6) node {4};
        \draw (3,-5) node {4};
        \draw (2,-7) node {5};
        \draw (3,-6) node {5};
        \draw (2,-8) node {6};
        \draw (3,-7) node {6};
        \draw (3,-8) node {7};
        \draw (2,-9) node {7};
        \draw (2,-10) node {8};
        \draw (3,-9) node {8};
        \draw (2,-11) node {9};
        \draw (3,-10) node {9};
        \draw (2,-12) node {10};
        \draw (3,-11) node {10};
        \draw (2,-13) node {11};
        \draw (3,-12) node {11};
        \draw (2,-14) node {12};
        \draw (3,-13) node {12};
        \draw (3,-14) node {13};
        \draw (2,-15) node {13};
        \draw (2,-16) node {14};
        \draw (3,-15) node {14};
        \draw (3,-16) node {15};

        \foreach \x in {1.5, 2.5, 3.5}
        \draw (\x,-16.5)--(\x,-2.5);
        
        \foreach \y in {-2.5, -3.5, -4.5, -5.5, -6.5, -7.5, -8.5, -9.5, -10.5, -11.5, -12.5, -13.5, -14.5, -15.5, -16.5}
        \draw (1.5,\y)--(3.5,\y);
        
        \draw[->] (4.5,-9.5)--(5.5,-9.5);
        \draw (7,-3) node {1};
        \draw (7,-4) node {2};
        \draw (8,-3) node {2};
        \draw (7,-5) node {3};
        \draw (8,-4) node {3};
        \draw (7,-6) node {4};
        \draw (8,-6) node {5};
        \draw (7,-8) node {6};
        \draw (8,-8) node {7};
        \draw (7,-10) node {8};
        \draw (8,-10) node {9};
        \draw (7,-12) node {10};
        \draw (8,-11) node {10};
        \draw (7,-13) node {11};
        \draw (8,-12) node {11};
        \draw (7,-14) node {12};
        \draw (8,-13) node {12};
        \draw (8,-14) node {13};
        \draw (7,-15) node {13};
        \draw (7,-16) node {14};
        \draw (8,-15) node {14};
        \draw (8,-16) node {15};
        
        \foreach \x in {6.5, 7.5, 8.5}
        \draw (\x,-16.5)--(\x,-2.5);
        
        \foreach \y in {-2.5, -3.5, -4.5, -5.5, -6.5, -7.5, -8.5, -9.5, -10.5, -11.5, -12.5, -13.5, -14.5, -15.5, -16.5}
        \draw (6.5,\y)--(8.5,\y);
        \draw[thick,color=white](6.5,-6.5)--(6.5,-7.5);
        \draw[thick,color=white](6.5,-8.5)--(6.5,-9.5);
        \draw[thick,color=white](6.5,-10.5)--(6.5,-11.5);
        \draw[thick,color=white](8.5,-4.5)--(8.5,-5.5);
        \draw[thick,color=white](8.5,-6.5)--(8.5,-7.5);
        \draw[thick,color=white](8.5,-8.5)--(8.5,-9.5);
        \draw[thick,color=white](7.5,-6.5)--(7.5,-7.5);
        \draw[thick,color=white](7.5,-8.5)--(7.5,-9.5);

        \draw[->] (9.5,-9.5)--(10.5,-9.5);
        \draw (12,-3) node {1};
        \draw (12,-4) node {2};
        \draw (13,-3) node {2};
        \draw (12,-5) node {3};
        \draw (13,-4) node {3};
        \draw (12,-6) node {4};
        \draw (13,-5) node {5};
        \draw (12,-7) node {6};
        \draw (13,-6) node {7};
        \draw (12,-8) node {8};
        \draw (13,-7) node {9};
        \draw (13,-8) node {10};
        \draw (12,-9) node {10};
        \draw (12,-10) node {11};
        \draw (13,-9) node {11};
        \draw (12,-11) node {12};
        \draw (13,-10) node {12};
        \draw (12,-12) node {13};
        \draw (13,-11) node {13};
        \draw (12,-13) node {14};
        \draw (13,-12) node {14};
        \draw (13,-13) node {15};

        \foreach \x in {11.5, 12.5, 13.5}
        \draw (\x,-13.5)--(\x,-2.5);
        
        \foreach \y in {-2.5, -3.5, -4.5, -5.5, -6.5, -7.5, -8.5, -9.5, -10.5, -11.5, -12.5, -13.5}
        \draw (11.5,\y)--(13.5,\y);
    \end{tikzpicture}
    \caption{Making $\lambda_D$ from $\Lambda$}
    \label{fig: Build D}
\end{figure}
\end{ex}

This construction provides a natural classification of the tableaux corresponding to divisors of degree $k$ and rank 1 on chains of loops.  We use similar notation to the trigonal case, denoting by $\lambda_D$ the tableau obtained in this manner corresponding to a divisor $D$ on $\Gamma$.  We associate a Dyck word (which we represent with matched sets of parentheses) to each tableau $\lambda_D$ as follows:  delete boxes from $\Lambda$ to form $D$, from top to bottom. As each box is deleted, add a $\big($ or a $\big)$ to the end of the word if the box is deleted from the first or zeroth column, respectively. 

We say two tableaux are of the same \emph{combinatorial type} if they have the same associated Dyck word. Since it is known that Dyck words are enumerated by the Catalan numbers,  the following is immediate.

\begin{cor}
\label{Cor: Catalan}
The number of combinatorial types of tableaux corresponding to divisors of degree $k$ and rank 1 on chains of loops is equal to the $(k-2)^{nd}$ Catalan number, $C_{k-2}$.
\end{cor}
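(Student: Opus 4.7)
The plan is to establish a bijection between combinatorial types of tableaux and a class of $C_{k-2}$ parenthesization sequences, and then invoke the classical Catalan enumeration.

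First, I would unpack Proposition~\ref{prop: ktableaux} in the language of the Dyck word described in the paragraph preceding the corollary. Any valid tableau $\lambda_D$ arises from $\Lambda$ by deleting $k-2$ boxes from each column, so its associated Dyck word has length $2(k-2)$ with $k-2$ copies of each parenthesis. The proposition's prefix condition---above any row, the column-$0$ deletion count does not exceed the column-$1$ deletion count---translates, via the assignment of $($ to column-$0$ deletions and $)$ to column-$1$ deletions read top-to-bottom, into the inequality $\#( \leq \#)$ at every prefix of the word. Since combinatorial type is defined to be the Dyck word itself, this gives a well-defined injection from combinatorial types into the set of such parenthesization sequences.

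Next I would verify surjectivity. Given any length-$2(k-2)$ word $w$ with $k-2$ of each parenthesis and $\#( \leq \#)$ at every prefix, place its $i$-th character in the $i$-th row of $\Lambda$ (one deletion per row, which is feasible since we may assume $g-1 \geq 2(k-2)$). The prefix condition on $w$ is exactly the deletion condition of Proposition~\ref{prop: ktableaux}, so this produces a valid tableau whose Dyck word is $w$. Thus combinatorial types are in bijection with parenthesization sequences of the described form.

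To finish, I would count these sequences. The involution swapping $($ with $)$ converts them to standard Dyck words of length $2(k-2)$---those with $\#( \geq \#)$ at every prefix and $k-2$ of each symbol---which are enumerated by the $(k-2)$nd Catalan number $C_{k-2}$. The only subtle point in the proof is matching the direction of the inequality (left versus right column, and hence $\#( \leq \#)$ rather than the standard $\#( \geq \#)$), but once that bookkeeping is in place, the corollary follows immediately from Proposition~\ref{prop: ktableaux} and the classical Catalan enumeration.
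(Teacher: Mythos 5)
Your proposal is correct and follows essentially the same route as the paper, which simply associates to each tableau the Dyck word recording its column deletions and invokes the Catalan enumeration of such words; the paper treats the corollary as immediate from \cref{prop: ktableaux} and this definition. Your write-up merely fills in the details (the prefix-condition translation, surjectivity, and the parenthesis-swapping involution) that the paper leaves implicit.
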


This result has significant computational implications. In the trigonal case, all tableaux have the same combinatorial type.  In higher gonality cases, the tableau $\lcd$ depends on the \emph{$i$-blocks} of $\tp$, which we now define.

\begin{defn}
\label{def: iblock}
Let $i > 1$ be an integer.  A collection $\{a+1, \ldots, b-1 \}$ of consecutive integers in $\{ 1, \ldots, g \}$ is called an \emph{$i$-block of $\tp$} if 
\begin{enumerate}
\item  $i$ is a multiple of $m_j$ for $a < j < b$, and
\item  $i$ is not a multiple of $m_a$ or $m_b$.
\end{enumerate}
\end{defn}

Each combinatorial type of $\lambda_D$ corresponds to a different distribution of $i$-blocks.  In particular, if the symbol $i$ appears only once in the tableau $\lambda_D$, then the $i$th torsion torsion order $m_i$ is arbitrary.  Otherwise, the $i$th torsion order $m_i$ must divide
\begin{align*}
2 - & \# \left( \text{ symbols } <i \text{ missing from column } 0 \right) \\  
 + & \# \left( \text{ symbols } <i \text{ missing from column } 1 \right) .
\end{align*}

\begin{defn}
Let $\lambda_D$ be a rectangular tableau of dimensions $(g-k+1) \times 2$ containing each of the symbols in $\{ 1, \ldots , g \}$. We say that the torsion profile $\tp$ is \emph{nondegenerate} if it satisfies the following conditions:
\begin{enumerate}
\item  if $i$ appears only once in the tableau $\lambda_D$, then $m_i = 0$, and
\item  otherwise,
\begin{align*}
m_i = 2 - & \# \left( \text{ symbols } <i \text{ missing from column } 0 \right) \\  
 + & \# \left( \text{ symbols } <i \text{ missing from column } 1 \right) .
\end{align*}
\end{enumerate}
\end{defn}

\section{Computing the Rank Sequence for Chains of Loops}
\label{Sec:Algorithm}

Let $D$ be a divisor of rank 1 on a chain of loops $\Gamma$.  The goal of this section is to compute the sequence $\rk(cD)$ as $c$ ranges over all positive integers.  By \cref{thm: Pflueger}, the divisor $cD$ has rank at least $r$ if and only if there exists a tableau $\lcd$ on a rectangle with $r+1$ columns and $g-kc+r$ rows such that $cD\in\TT(\lcd)$. By \cref{lemma: xi}, $cD\in \TT(\lcd)$ if and only if, whenever $\lcd(x,y)=i$, we have
\begin{align}
\label{Eq:Cong2}
y-x \equiv \xi_i^c = c\;\xi_i - (c-1)(i-1) \Mod{m_i}.
\end{align}

To produce the largest possible $\tp$-displacement tableau satisfying this congruence condition, we make use of some original SAGE code available at
\begin{center}
\url{https://github.com/kalilajo/numberboxes}.
\end{center}

In the remainder of this section, we describe the algorithm implemented by this code, prove that the resulting tableaux are optimal, and provide a few corollaries.

\subsection{Algorithm for constructing $\lcd$}

In essence, our algorithm is straightforward.  We attempt to construct a $\tp$-displacement tableau, satisfying the congruence condition, that is as large as possible.  If we fail, we reduce the number of columns by 1 and try again, until we succeed.  In theory, our algorithm does not require us to have constructed the tableau $\lambda_D^{c-1}$.  However, since the rank of $cD$ cannot exceed that of $(c-1)D$ by more than $k$, if we know the rank of $(c-1)D$, then we have bounds on the possibilities for the rank of $cD$, and our algorithm is more efficient.  In practice, therefore, we construct the tableaux recursively, starting with $\lambda_D^2$, then using the dimensions of $\lambda_D^2$ to bound those of $\lambda_D^3$, and so on.  To that end, we make the following definition.

\begin{defn}
\label{Def: LambdacD}
For $c\geq 2$, let 
\[
j:= k - ( \rk(cD) - \rk((c-1)D)).
\]
\end{defn}

Note that $\lcd$ has $j$ fewer rows and $k-j$ more columns than $\lambda_D^{c-1}$.  Identifying $j \in \{1,\ldots,k-1\}$ is the overall goal of our calculation.  Given $\rk((c-1)D)$, we construct $\lcd$ as follows.

\vskip 4 pt

\noindent \textbf{Step 1:  Set $j=1$.}
We begin by setting $j = 1$, and we attempt to construct $\lcd$ so that it has $j$ fewer rows and $k-j$ more columns than $\lambda_D^{c-1}$.

\vskip 4 pt

\noindent \textbf{Step 2:  Start with the diagonal $x+y = 0$.}
To construct $\lcd$, we ``traverse'' each diagonal defined by fixing the sum of the coordinates, beginning with $x+y=0$.

\vskip 4 pt

\noindent \textbf{Step 3:  Traverse the diagonal.}
When traversing a diagonal, we start with its leftmost box.  Each time we arrive at a new box $(x,y)$, we fill it with the smallest $s \in \{ 1, \ldots , g \}$ that is larger than both the entry $\lcd(x,y-1)$ above it and the entry $\lcd(x-1,y)$ to the left of it, and such that \cref{Eq:Cong2} is satisfied.  If there is no value of $s$ such that these conditions hold, we increase the value of $j$ by 1 and return to Step 2.  

If we fill the box $(x,y)$, we proceed to the box $(x+1,y-1)$ above and to the right of the current box, along the same diagonal.  If the box $(x,y)$ is the rightmost box on this diagonal, we increase the sum $x+y$ by 1 and repeat Step 3.  If $(x,y)$ is the bottom right corner of the rectangle, terminate the algorithm and output the rectangular tableau $\lcd$.

\begin{rmk}
Alternatively, one can think of this algorithm in the following way.  Proceeding diagonal by diagonal, one produces the largest non-rectangular tableau possible such that $cD \in \TT (\lcd)$.  One then finds the largest value of $r$ such that this partition contains a rectangle with $r+1$ columns and $g-3c+r$ rows.
\end{rmk}

\subsection{Verifying the algorithm}

We apply this algorithm recursively to find the largest tableau $\lcd$ such that $cD \in \TT (\lcd)$ for each value of $c$.  It remains to show the tableaux generated by this algorithm are optimal.

\begin{prop}
Let $t$ be a tableau such that $cD\in\TT(t)$. Then $t(x,y)\geq\lcd(x,y)$ for all $x,y$. 
\end{prop}

\begin{proof}
We proceed by induction. The base case, $t(0,0)\geq 1$, is immediate. We assume that $t(x',y')\geq \lcd(x',y')$ for all $x'$, $y'$ such that either $x'<x$, $y'\leq y$ or $x'\leq x$, $y'< y$ and show that $t(x,y) \geq \lcd(x,y)$.  By construction, $\lcd (x,y)$ is the smallest symbol greater than both $\lcd(x-1,y)$ and $\lcd(x,y-1)$ that satisfies \cref{Eq:Cong2}.  Our inductive hypothesis implies that $t(x,y)$ must satisfy these conditions as well.  We must therefore have $t(x,y)\geq \lcd(x,y)$. 
\end{proof}

\subsection{A Chain of Loops with Generic Scrollar Invariants}

We make a simple observation on the output of our algorithm.

\begin{lema}
\label{lemma: i-count}
Suppose that the torsion profile $\tp$ is nondegenerate.  If $\lcd(x,y)$ and $\lambda_D^c(x,y) + i-1$ are in the same $i$-block, then
\[
\lcd(x+1,y) \geq \lcd(x,y) + i-1.
\]
\end{lema}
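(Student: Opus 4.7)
The plan is to work directly from the algorithm: by construction, $\lcd(x+1,y)$ is the smallest symbol $t$ strictly exceeding both $\lcd(x,y)=s$ and $\lcd(x+1,y-1)$ for which \cref{Eq:Cong2} holds at $(x+1,y)$. It therefore suffices to rule out, for every $j \in \{1,\ldots,i-2\}$, the possibility that $s+j$ satisfies the congruence
\[
y-(x+1) \equiv \xi^c_{s+j} \pmod{m_{s+j}}
\]
at $(x+1,y)$, given that $s$ satisfies the analogous congruence at $(x,y)$.

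The first step is structural: I would use nondegeneracy to describe the torsion data inside the block. A symbol with $m_j=0$ cannot belong to any $i$-block, so under nondegeneracy each of $s, s+1, \ldots, s+i-2$ must appear twice in $\lambda_D$. The incremental consequence of $m_j = 2 + p_j - q_j$, namely $m_{j+1}-m_j = [j \in S_0]-[j \in S_1]$, then forces $m_s = m_{s+1} = \cdots = m_{s+i-1}$ to equal some common value $m$, since none of these intermediate symbols is deleted from either column. By construction $m \mid i$, and in the regime governed by \cref{def: iblock} for an $i$-block whose natural torsion matches the stated $i$, we have $m=i$. A direct inspection of $\lambda_D$ shows that these symbols occupy consecutive rows of column $0$, so if $s$ sits at row $r$ of column $0$, then $s+j$ sits at row $r+j$ and $\xi^1_{s+j} \equiv r+j \pmod{i}$ for $0 \le j \le i-2$.

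The second step is a one-line modular computation. By \cref{lemma: xi},
\[
\xi^c_{s+j} \equiv c(r+j) - (c-1)(s+j-1) \equiv \xi^c_s + j \pmod{i}.
\]
The hypothesis $\lcd(x,y)=s$ gives $y-x \equiv \xi^c_s \pmod{i}$, while a hypothetical placement $\lcd(x+1,y)=s+j$ would demand $y-x-1 \equiv \xi^c_s + j \pmod{i}$. Subtracting forces $j+1 \equiv 0 \pmod{i}$, which is impossible for $1 \le j \le i-2$. Hence no such $s+j$ can fill $(x+1,y)$, and we conclude $\lcd(x+1,y) \ge s + i-1$.

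The step I expect to be the main obstacle is the first: extracting from nondegeneracy plus the $i$-block definition that the common torsion $m$ across the intermediate block symbols is genuinely equal to $i$ and not a proper divisor. Without this reduction, the modular calculation only yields $\lcd(x+1,y) \ge s + m - 1$. Pinning down $m=i$ uses the interplay between the step-wise bound $|m_{j+1}-m_j|\le 1$, the requirement that $\lambda_D$ contains every symbol in $\{1,\ldots,g\}$, and the maximality built into \cref{def: iblock}. Once this structural fact is in hand, the arithmetic is immediate from \cref{lemma: xi}.
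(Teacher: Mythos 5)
Your argument is correct and follows essentially the same route as the paper's proof: both reduce the claim to the congruence $\xi^c_{s+j} \equiv \xi^c_s + j \pmod{i}$ (obtained from nondegeneracy together with \cref{lemma: xi}) and then observe that placing $s+j$ in box $(x+1,y)$ would force $j \equiv -1 \pmod{i}$, which first happens at $j = i-1$. The one step you flag as delicate --- that the common torsion order $m$ of the block's interior symbols is $i$ itself rather than a proper divisor --- is a genuine subtlety, but the paper's own proof does not address it either (it simply works modulo $i$ throughout); the intended reading of \cref{def: iblock} is that $i$ is the common nondegenerate torsion order of the interior symbols, and under that reading your computation goes through verbatim.
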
    
    
\begin{proof}
By definition, we have
\[
y - x \equiv \xi_{\lcd(x,y)} \equiv c\xi_{\lambda_D(x,y)}^1 - (c-1)(i-1) \Mod{i} .
\]
Since $\tp$ is nondegenerate and $\lambda_D(x,y)$ and $\lambda_D(x,y) + i-1$ are in the same $i$-block, we see that
\[
\xi_{\lambda_D(x,y)+j}^1 = \xi_{\lambda_D(x,y)}^1 + j \mbox{ for all } 0 \leq j \leq i-1,
\]
so
\[
\xi_{\lambda_D(x,y)+j}^c \equiv \xi_{\lambda_D(x,y)}^c + j \Mod{i}
\]
for all $j$ in the same range.  It follows that $i-1$ is the smallest value of $j$ such that
\[
\xi_{\lcd(x,y)+j} \equiv \xi_{\lcd(x,y)} -1 \Mod{i}.
\]
We therefore see that $\lcd (x+1,y) \geq \lcd (x,y) + i-1$. 
\end{proof}  

As a consequence, we see that there is a torsion profile that maximizes the composite scrollar invariants.  The torsion profile below corresponds to the tableau where the symbols $g-k+2, \ldots , g$ are missing from column zero, and the symbols $1, \ldots , k-1$ are missing from column one.  We note that this torsion profile has been used in several papers to examine the behavior of general curves of gonality $k$ \cite{CPJ, DaveDhruv, PfluegerKGonal}.  Corollary~\ref{Cor:Generic} provides further evidence that this chain of loops behaves like a general curve of gonality $k$, as it has the scrollar invariants of a general curve.

\begin{cor}
\label{Cor:Generic}
Suppose
\[
\tp = (0,\ldots,0,k,\ldots,k,0,\ldots,0).
\]
Then $\rk(cD) = c$ for all $c$ such that $g > c(k-1)$.  
\end{cor}

\begin{proof}
Suppose that $\lcd$ has more than $c+1$ columns.  By \cref{lemma: i-count}, $\lcd(c+1,0) \geq (c+1)(k-1)$.  It follows that 
\[
\lcd(c+1,g-c(k-1)) \geq g - c(k-1) + (c+1)(k-1) = g + k - 1 > g,
\]
which is impossible.  Thus $\lcd$ has at most $c+1$ columns, and $\rk(cD) = c$. 
\end{proof}

\subsection{Trigonal Chains of Loops}

If the torsion profile is more exotic, then the composite scrollar invariants can vary in interesting ways.  We illustrate this phenomenon using an example when $k=3$.  We first describe aspects of the algorithm that are common to all trigonal chains of loops, and then specialize to a specific case.

There are several aspects of the trigonal case that greatly simplify our procedure.  First, by \cref{cor: trig torsion profile}, there is only one combinatorial type of trigonal chain of loops.  Second, $\sigma_1$ is the only nontrivial composite scrollar invariant.  Moreover, to compute $\sigma_1 (\Gamma,D)$, it suffices to compute the smallest integer $n$ such that $K_{\Gamma} - nD$ is not effective.  This is because, by Riemann-Roch, we have
\begin{align*}
\rk (nD) &= 3n-g, \\
\rk (cD) & = \rk ((c-1)D) + 3 \text{ for all } c > n,  \\
\rk (cD) & \leq \rk((c-1)D) + 2 \text{ for all } c \leq n. 
\end{align*}
It follows from this that $\rk(cD) - 2c$ is minimized when $c = n$, hence:
\[
\sigma_1 (\Gamma, D) = \min \{ m \mid \rk(cD) \geq 2c+1-m \text{ for all } c \} = g+1-n.
\]
For this reason, in the trigonal case it is not necessary to compute the full sequence of ranks $\rk(cD)$ -- it suffices to compute only the invariant $n$.  In our example, however, we do compute the full sequence of ranks, for several reasons.  First, for higher gonality, it is necessary to compute the full sequence, and we wish to illustrate the method.  Second, we highlight some of the pathologies that arise in the tropical setting.  For example, the sequence of ranks is not convex, as it is in classical algebraic geometry.

Fix integers $a$ and $b$, and let $\lab$ be the displacement tableau of dimensions $(g-2) \times 2$ defined in Section~\ref{sec:Tableaux}.  We further assume that $a < b$ and that $m_a = m_b = 0$.  Note that these assumptions exclude some chains of loops of gonality 3.  Among the cases we do not consider, the case where $a=b$ is particularly interesting, as there are infinitely many divisors of degree 3 and rank 1, and they do not all have the same scrollar invariants.  Let $\Gamma$ be a chain of loops with torsion profile $\tp$  and let $\dab \in \mathbb{T}(\lab)$ be a divisor class on $\Gamma$ of degree 3 and rank 1.  Note that since $a \neq b$, there is a unique divisor class $\dab \in \mathbb{T}(\lab)$.

As above, the divisor $c\dab$ has rank at least $r$ if and only if there exists a tableau $\lab^c$ on a rectangle with $r+1$ columns and $g-3c+r$ rows such that $c\dab \in \mathbb{T}(\lab^c)$.  In particular, $n$ will be the smallest positive integer such that no such tableau $\lab^n$ exists.  By \cref{lemma: xi}, $c\dab \in \mathbb{T}(\lab^c)$ if and only if, whenever $\lab^c (x,y) = i,$ we have
\begin{align}
\label{Eq:Cong}
y-x = \left\{ \begin{array}{ll}
i-1 \Mod{m_i} & \textrm{if $i < b$,} \\
i-1-3c \Mod{m_i} & \textrm{if $i \geq b$.}
\end{array} \right.
\end{align}
Note in particular that if $i \leq b$, then the congruence conditions above are independent of $c$.  We now proceed to a concrete example.

\begin{ex}
\label{Ex:Trigonal}
Let $g=12$, $a=2$, and $b=4$.  We will use the algorithm to compute the sequence of ranks $\rk(c\dab)$ for all $c$.  First, we build $\lab^2$.  We naively assume $\lab^2$ has $j=1$ fewer rows and $3-j=2$ more columns than $\lab$.  We traverse and fill the diagonals as in steps 2 and 3 of the algorithm.  While doing this, we may only place a symbol $s$ in box $(x,y)$ if \cref{Eq:Cong} is satisfied.  By the remark following \cref{Eq:Cong}, if $i \leq b$ and the symbol $i$ appears in box $(x,y)$ in $\lab$, then it will appear in this same box in $\lab^c$ for all $c$.  If $b < i < g$, however, then $m_i = 2$ and the boxes $(x,y)$ in $\lab^c$ containing the symbol $i$ depend on the parity of $c$.  Using this, we traverse and fill the diagonals of a $9 \times 4$ tableau as we are able. The result is shown in \cref{fig: lambda_D^2 j=1}.

\begin{figure}[h]
    \centering
   \begin{ytableau}
1 & 3 & 5 & 6 \\
2 & 5 & 6 & 7 \\
3 & 6 & 7 & 8 \\
6 & 7 & 8 & 9 \\
7 & 8 & 9 & 10 \\
8 & 9 & 10 & 11 \\
9 & 10 & 11 &  \\
10 & 11 & 12 & \\
11 &  & &
\end{ytableau}
    \caption{Attempting to build $\lab^2$ with $j=1$}
    \label{fig: lambda_D^2 j=1}
\end{figure}

We see that this attempt was unsuccessful, as there were not enough symbols to fill the whole tableau.  We therefore repeat this process with a tableau of dimensions $8 \times 3$ -- that is, assuming $j=2$.  This time, we are successful.  The resulting tableau is shown in \cref{fig: lambda_D^2 j=23}.  Since the algorithm terminates when $j=2$, we see that $\rk(2\dab) = \rk(\dab) + (3-2) = 2$.  Note that this tableau is the restriction of the previous one to a rectangle with one fewer row and one fewer column.  Our procedure restricts to smaller and smaller rectangles until every box is filled.  In this way, we see that this process is equivalent to building the largest non-rectangular tableau possible, then finding the largest rectangular tableau that it contains. 

\begin{figure}[h]
    \centering
       \begin{ytableau}
1 & 3 & 5 \\
2 & 5 & 6 \\
3 & 6 & 7 \\
6 & 7 & 8 \\
7 & 8 & 9  \\
8 & 9 & 10 \\
9 & 10 & 11 \\
10 & 11 & 12 
\end{ytableau}
    \caption{The tableau $\lab^2$.}
    \label{fig: lambda_D^2 j=23}
\end{figure}

Now that we have built $\lab^2$, we build $\lab^3$ using the same procedure.  We then build $\lab^4$ , $\lab^5$, and finally, $\lab^6$.  The resulting tableaux are pictured in Figure~\ref{Fig:HigherMultiples}.  From these tableaux, we see that the sequence of ranks $\rk(c\dab)$ is given by $1, 2, 3, 5, 6, 8, 9, 12 \ldots$  Note that
\[
\rk(5\dab) - \rk(4\dab) = 1 < 2 = \rk(4\dab) - \rk(3\dab),
\]
hence the sequence of ranks is not convex.

\begin{figure}[h]
    \centering
       \begin{ytableau}
1 & 3 & 6 & 7 \\
2 & 6 & 7 & 8 \\
3 & 7 & 8 & 9 \\
5 & 8 & 9 & 10 \\
8 & 9 & 10 & 11  \\
9 & 10 & 11 & 12 
\end{ytableau} \hskip2ex
\begin{ytableau}
1 & 3 & 5 & 6 & 7 & 8 \\
2 & 5 & 6 & 7 & 8 & 9 \\
3 & 6 & 7 & 8 & 9 & 10 \\
6 & 7 & 8 & 9 & 10 & 11 \\
7 & 8 & 9 & 10 & 11 & 12
\end{ytableau} \vskip2ex

\begin{ytableau}
1 & 3 & 6 & 7 & 8 & 9 & 10 \\
2 & 6 & 7 & 8 & 9 & 10 & 11 \\
3 & 7 & 8 & 9 & 10 & 11 & 12
\end{ytableau} \hskip2ex
\begin{ytableau}
1 & 3 & 5 & 6 & 7 & 8 & 9 & 10 & 11 \\
2 & 5 & 6 & 7 & 8 & 9 & 10 & 11 & 12
\end{ytableau}
    \caption{The tableaux $\lab^3$, $\lab^4$, $\lab^5$, and $\lab^6$.}
    \label{Fig:HigherMultiples}
\end{figure}

From this sequence of ranks, we see that $n=7$, hence $\sigma_1 (\Gamma, \dab) = 6$.  If we let $\ell$ be the smallest integer such that $\rk (\ell \dab) > \ell$, then $\ell = 4$.  As discussed in the introduction, on an algebraic curve, the invariants $\ell$ and $\sigma_1$ are equal, but here, on this tropical curve, they disagree.  In general, if $X$ is a curve with skeleton $\Gamma$ and $D_X$ is a divisor of rank 1 on $X$ specializing to $\dab$, then the invariant $\ell$ is a lower bound on $\sigma_1 (X,D_X)$.  More precisely, we have
\[
\ell \leq \sigma_1 (\Gamma,\dab) \leq \sigma_1 (X,D_X),
\] 
and this example shows that the first inequality can be strict.  In other words, $\sigma_1 (\Gamma,D)$ is a better bound than the invarant $\ell$.  From this, we see that the Maroni invariant of $X$ is at most 2.
\end{ex}

\bibliographystyle{maastyle}
\bibliography{References}

\end{document}